\title[\protect\parbox{0.95\linewidth}{Optimal transport with optimal transport cost: the
Monge--Kantorovich problem on Wasserstein spaces}]{Optimal transport with optimal transport cost: the
Monge--Kantorovich problem on Wasserstein spaces}
\date{\today}
\author{Pedram Emami}
\email{emami1@ualberta.ca, pass@ualberta.ca}
\author{Brendan Pass}
\thanks{The work of PE was completed in partial fulfillment of the requirements for a doctoral degree in
mathematics at the University of Alberta. It is a pleasure to thank Lorenzo Dello Schiavo and an
anonymous referee for many insightful comments on an earlier version of this manuscript.}
\address{University of Alberta, Edmonton, Alberta, Canada}
\subjclass[2010]{49Q22}
\keywords{optimal transport, Monge solutions, Wasserstein space}
\DeclareSymbolFont{let}{U}{zeur}{m}{n}
\DeclareSymbolFont{symb}{U}{zeus}{m}{n}
\DeclareSymbolFont{lsymb}{U}{zeuex}{m}{n}
\DeclareMathSymbol{\psy}{\mathalpha}{let}{"20}
\definecolor{heavyred}{cmyk}{0,1,1,0.25}
\definecolor{heavyblue}{cmyk}{1,1,0,0.25}
\newtheoremstyle{thmStyle}{}{}{\it}{}{\bfseries}{.}{ }{}
\newtheoremstyle{corStyle}{}{}{\it}{}{\bfseries}{.}{ }{}
\newtheoremstyle{roStyle}{}{}{}{}{\it\bfseries}{.}{ }{}
\newtheoremstyle{prfStyle}{}{}{}{}{\it\bfseries}{.}{ }{}
\newtheoremstyle{exStyle}{}{}{}{}{\bfseries}{.}{ }{}  
{
  \theoremstyle{thmStyle}
  \newtheorem{theorem}{Theorem}
  \newtheorem{definition}[theorem]{Definition}
  
  \newtheorem{lemma}[theorem]{Lemma}
   
  \newtheorem{assumption}[theorem]{Assumption}
  \newtheorem{remark}[theorem]{Remark}

}
{
    \theoremstyle{prfStyle}
  \newtheorem*{prf}{Proof}
}
{
    \theoremstyle{exStyle}
  
}
\let\cal\mathcal%
\def\R{\mathbb{R}}%
\def\bD{\textbf{\text{D}}}%
\def\compose{\circ}%
\def\ddt{\dfrac{d}{dt}}%
\def\(#1\){\left(#1\right)}%
\def\<#1\>{\langle#1\rangle}%
\def\]#1\]{\left\{#1\right\}}%
\def\|#1|{\@ifnextchar|{\left\lVert#1\right\rVert\sbox0}{\left|#1\right|}}%
\def\norm{\left\lVert\,\cdot\,\right\rVert}%
\def\half{\frac{1}{2}}%
\def\endofprf{\hbox{}\hfill$\square$}%
\def\obar#1{\overline{\sbox1{$#1$}\dp1=0pt\box1}}%
\def\volg{\mathrm{dvol_g}}%
\def\smoothFP{\cal{FC}^\infty}%
\def\pM{\cal P(M)}%
\def\pac{\cal P^\mathrm{ac}(M)}%
\def\PM{\cal{P}(M)}%
\def\Exp{\mathrm{exp}}%
\def\RP{\mathbb{P}}%
\def\Xz{\mathfrak{X}^0}%
\def\Xmu{{\mathfrak{X}_\mu}}%
\def\Xinf{\mathfrak{X}^\infty}%
\def\flow{\psy^{w,t}}%
\begin{document}
\maketitle
\bibliographystyle{amsplain}

\begin{abstract}
We consider the Monge--Kantorovich problem between two random measures. More precisely, given probability
measures $\mathbb{P}_1,\mathbb{P}_2\in\mathcal{P}(\mathcal{P}(M))$ on the space $\mathcal{P}(M)$ of
probability measures on a smooth compact manifold, we study the optimal transport problem between
$\mathbb{P}_1$ and $\mathbb{P}_2 $ where the cost function is given by the squared Wasserstein distance
$W_2^2(\mu,\nu)$ between $\mu,\nu \in \mathcal{P}(M)$. Under appropriate assumptions on $\mathbb{P}_1$,
we prove that there exists a unique optimal plan and that it takes the form of an optimal map. An
extension of this result to cost functions of the form $h(W_2(\mu,\nu))$, for strictly convex and
strictly increasing functions $h$, is also established. The proofs rely heavily on a recent result of
Dello Schiavo \cite{schiavo2020rademacher}, which establishes a version of Rademacher's theorem on
Wasserstein spaces.
\end{abstract}

\section{Introduction} This paper focuses on optimal transport when the ambient space itself is the
infinite-dimensional space of probability measures on a Riemannian manifold, endowed with the Wasserstein
metric.

Assume that $X$ and $Y$ are metric spaces. Given probability measures $\mu\in\cal P(X)$ and $\nu\in\cal
P(Y)$, and a cost function $c:X\times Y\rightarrow\R$, a fairly general version of Monge's original
formulation of the optimal transport problem is as follows \cite{monge1781}:

\begin{equation}\label{eqn: Monge problem}
\inf_{\stackrel{T:X\rightarrow Y,}{T_\#\mu =\nu}}\,\int_{X}c(x,T(x))\,d\mu,
\end{equation}
where the infimum is over measurable maps $T:X \rightarrow Y$ pushing $\mu$ forward to $\nu,$
$T_{\#}\mu=\nu$; that is, $\mu(T^{-1}(A)) =\nu(A)$ for all Borel $A \subseteq Y$.

Non-linearity and a lack of compactness make this problem challenging on its own.  Kantorovich's
relaxed version \cite{kantorovich1942} sidesteps these issues by seeking transport \emph{plans} rather
than maps.  Explicitly, his formulation of the problem is:

\begin{equation} \label{eqn: kantorovich problem}
\inf_{\gamma\in\Pi(\mu,\nu)}\int_{X\times Y}c(x,y)\,d\gamma,
\end{equation}
where $\Pi(\mu,\nu)$ is the set of all measures on $X\times Y$ such that $(\pi_X)_{\#}\gamma=\mu$,
and $(\pi_Y)_{\#}\gamma=\nu$, where $\pi_X$ and $\pi_Y$ are the canonical projections.  Kantorovich's
problem amounts to an infinite-dimensional linear program, and, in contrast to \eqref{eqn: Monge
problem}, it is straightforward to show that a solution $\gamma$, known as an \emph{optimal plan}, exists
under reasonable assumptions on $c$, $\mu$ and $\nu$.

Problems \eqref{eqn: Monge problem} and \eqref{eqn: kantorovich problem} have generated a lot of research
over the past several decades, and have many applications, reviewed in the monographs of Villani
\cite{villani2021topics, villani2009} and Santambrogio \cite{santambrogio2015optimal}.  In many of these
applications, the cost function $c(x,y) =d^2(x,y)$ is the metric distance squared on a common metric
space $X=Y$, and we focus on this case in what follows.    We highlight here two important themes, which
are pertinent to this paper. First, the problem induces an extended distance $W_2$ on the space
$\mathcal{P}(X)$ of probability measures on $X$.  This distance is known as the Wasserstein distance, and
$(\mathcal{P}(X), W_2 )$ as the Wasserstein space.    Second, for many choices of $X$, under additional
assumptions on $\mu$, it is possible  to prove that the solution to \eqref{eqn: kantorovich problem} is
unique, and concentrates on the graph of a function $T: X \rightarrow X$ pushing $\mu$ forward to $\nu$.
This $T$ in turns solves the Monge problem  \eqref{eqn: Monge problem}, and is known as an \emph{optimal
map}.  This result was first established on convex Euclidean domains by Brenier
\cite{brenier1991polar}, and has since been extended by many authors to a wide variety of other geometric
situations, notably including Riemannian manifolds by McCann \cite{mccann2001polar}, as well as the
Heisenberg group \cite{AmbrosioRigot04}, general sub-Riemannain manifolds \cite{FigalliRifford10},
Alexandrov spaces \cite{Bertrand08}, and several infinite-dimensional spaces which we will touch
on in more detail shortly.

Our purpose here is to establish a similar result when the underlying metric space $X$ is itself the
Wasserstein space over a compact Riemannian manifold\footnote{Throughout this paper, the term manifold will always mean a manifold without boundary.} $M$, $(X,d) = (\mathcal{P}(M), W_2 )$.  The optimal
transport problem on Wasserstein spaces is natural, as it can be thought of as  quantifying the
distance between two random (probability) measures.  In addition, we note that a substantial amount of
recent research on the applied side of optimal transport has focused on situations where the marginals
$\mu$ and $\nu$ in optimal transport problems on finite dimensional spaces $X$ and $Y$, or other measures
of interest, are not known with certainty; see for instance \cite{BlanchetMurthy2019, TrigilaTabak2016,
GhossoubSaundersZhang2024}. This uncertainty or ambiguity is  expressed in different ways depending on
the precise context, but it seems reasonable to conjecture that problems where marginal uncertainty is
reflected by a \emph{distribution} of possible marginals on Wasserstein space (instead of precisely known
prescribed marginals) may be relevant in some of these applications. This leads naturally to our problem.

Let us say a few words about the proof of our main result. For general choices of $X$, it is well
understood that one can only establish the existence of optimal maps under certain regularity assumptions
on $\mu$; namely, one needs to exclude the possibility of $\mu$ charging sets which are "small" in an
appropriate sense.  Although more refined notions exist (see \cite{gigli2011inverse} for a sharp
condition when $X \subseteq \mathbb{R}^n$), a straightforward sufficient hypothesis (a version of which
we will adopt here) is that $\mu$ is absolutely continuous with respect to an appropriate reference
measure.  From a technical perspective, this hypothesis arises in the proof  of the existence of optimal
maps by ensuring that we can differentiate Kantorovich potentials (defined in the next section) $\mu$
almost everywhere.  Since these potentials are well known to be locally Lipschitz, Rademacher's theorem
makes this work when the reference measure is, for example, the Lebesgue measure on
$\mathbb{R}^n$, or Riemannian volume on a Riemannian manifold. It is also worth mentioning the case of
the Wiener space, which, like our setting here, is infinite-dimensional, where Monge solutions
were established in \cite{FeyelUstunel04} and \cite{Kolesnikov2004}.  In that case, the natural reference
measure is the Wiener measure.   Existence and uniqueness of optimal maps were proven more
generally on separable Hilbert spaces in \cite{ambrosio2008gradient}, where all non-degenerate Gaussian
measures play the role of the reference measure in a certain sense, and on configuration spaces in \cite{decreusefond2008wasserstein}.

In our particular infinite-dimensional setting $X =\mathcal{P}(M)$, the choice of an appropriate
reference measure is non-trivial.  However, recent work of Dello Schiavo establishes a version of
Rademacher's theorem on Wasserstein spaces for a class of reference measures satisfying certain
assumptions \cite{schiavo2020rademacher}.  Here, we show that with this result in hand, the fairly
standard proof strategy for the existence of optimal maps on many metric spaces can indeed be adapted to
the Wasserstein space.

The paper is organized as follows.  In the following section, we review some basic facts and notation on
optimal transport and Riemannian geometry which we will use later on. In Section 3 we recall relevant
information on the space of probability measures on a Riemannian manifold, and we state and prove our
main theorem on the existence and uniqueness of optimal maps on this space in Section 4.  Section 5 is
reserved for an extension of this result to more general cost functions.

\section{Preliminaries}
\subsection{Basic tools for optimal transport}

A key tool in the analysis of optimal transport plans is the Kantorovich dual problem, which reads:
\begin{equation}\label{eqn: dual problem}
\sup_{\substack{\phi\in C(X),\psi\in C(Y)}}
\]\int_X{\phi(x)\,d\mu}+\int_Y{\psi(y)\,d\nu};\quad \phi(x)+\psi(y)\leq c(x,y)\].
\end{equation}
Solutions $\varphi$ and $\psi$ to this problem are called \textit{Kantorovich potentials}.  They can be
chosen to have a particular structure, expressed in the following definition.

\begin{definition}
A function $\phi:X \rightarrow \mathbb{R}$ is called \emph{$c$ -concave} if 
\begin{equation*}
\phi(x) =\psi^c(x):=\inf_{y \in Y}c(x,y) - \psi(y)
\end{equation*}
for some function $\psi:Y \rightarrow \mathbb{R}$. Similarly, a function $\psi:Y \rightarrow \mathbb{R}$
is called $c$-concave if
\begin{equation*}
\psi(y) =\phi^c(y):=\inf_{x \in X}c(x,y) - \phi(x)
\end{equation*}
for some $\phi:X \rightarrow \mathbb{R}$.
\end{definition}

The following well known theorem, asserting duality between \eqref{eqn: kantorovich problem} and
\eqref{eqn: dual problem}, makes precise the remarks above about the connection between \eqref{eqn:
kantorovich problem}, \eqref{eqn: dual problem} and the notion of $c$-concavity, and will play a crucial
role in this paper.

\begin{theorem}\label{Kantorovich-duality-extension}
Let $\mu,\nu$ be Borel probability measures on compact metric spaces, $X$ and $Y$, respectively, and $c:X
\times Y \rightarrow \mathbb{R}$ a continuous cost function.

Then the infimal value in \eqref{eqn: kantorovich problem} equals the supremal value in \eqref{eqn: dual
problem}, and there exists a minimizer $\gamma$ in \eqref{eqn: kantorovich problem} and a maximizer
$(\phi,\psi) =(\psi^c,\phi^c)$ in \eqref{eqn: dual problem} such that both $\phi$ and $\psi$ are
$c$-concave.

Furthermore, for all solutions $\gamma$ to \eqref{eqn: kantorovich problem} and $(\phi,\psi)$ in
\eqref{eqn: dual problem} we have
\begin{equation*}
\phi(x) +\psi(y) =c(x,y)\qquad\gamma\text{ almost everywhere}.   
\end{equation*}
\end{theorem}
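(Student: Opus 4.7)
The plan is to follow the classical route: weak duality by direct integration, existence of a Kantorovich minimizer by weak-$\ast$ compactness, reduction to $c$-concave potentials via the $c$-transform, strong duality via a Hahn-Banach argument, and finally the pointwise equality on the support of $\gamma$.

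First I would establish weak duality. For any $\gamma \in \Pi(\mu,\nu)$ and any continuous pair $(\phi,\psi)$ with $\phi(x) + \psi(y) \leq c(x,y)$, integrating this inequality against $\gamma$ and using the marginal constraints yields
\[
\int_X \phi\, d\mu + \int_Y \psi\, d\nu \leq \int_{X\times Y} c\, d\gamma,
\]
so $\sup(\mathrm{DP}) \leq \inf(\mathrm{KP})$. The existence of a minimizer in $(\mathrm{KP})$ then follows from compactness: since $X$ and $Y$ are compact, $\Pi(\mu,\nu)$ is tight, hence weak-$\ast$ relatively compact by Prokhorov; the marginal conditions are preserved under weak convergence, so $\Pi(\mu,\nu)$ is weak-$\ast$ compact, and $\gamma \mapsto \int c\, d\gamma$ is weak-$\ast$ continuous because $c \in C(X \times Y)$.

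To obtain a $c$-concave maximizing pair I would use the double $c$-transform. If $(\phi,\psi)$ is any admissible dual pair then admissibility is equivalent to $\phi \leq \psi^c$ and to $\psi \leq \phi^c$, so replacing first $\phi$ by $\psi^c$ and then $\psi$ by $(\psi^c)^c$ preserves admissibility while not decreasing the dual value and yields a $c$-concave pair. The uniform continuity of $c$ on the compact space $X \times Y$ forces the family of $c$-concave functions to be equicontinuous, and after a harmless additive normalization (exploiting the invariance of the dual value under $(\phi,\psi) \mapsto (\phi + K, \psi - K)$) they are uniformly bounded; Arzel\`a-Ascoli then yields a uniform subsequential limit of any maximizing sequence of $c$-concave pairs, and this limit is admissible and attains the supremum.

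The main technical obstacle is ruling out a duality gap, which I would handle by a Hahn-Banach argument in the Banach space $C(X \times Y)$. Define $(\phi \oplus \psi)(x,y) := \phi(x) + \psi(y)$ and the sublinear functional
\[
\Theta(u) = \inf\bigl\{ \textstyle\int \phi\, d\mu + \int \psi\, d\nu \,:\, \phi \in C(X),\ \psi \in C(Y),\ u \leq \phi \oplus \psi \bigr\},
\]
verify that $\Theta$ is well defined, convex, and positively homogeneous, and extend the linear form $tc \mapsto t\,\Theta(c)$ on $\mathbb{R}c$ to a linear functional on $C(X\times Y)$ dominated by $\Theta$. By the Riesz representation theorem the extension is integration against a Radon measure $\gamma$; a short check using $\Theta(\pm u) \leq 0$ for $u \in C(X)$ or $u \in C(Y)$ forces $\gamma$ to be non-negative with marginals $\mu$ and $\nu$. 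Hence $\gamma \in \Pi(\mu,\nu)$ and $\int c\, d\gamma \leq \Theta(c) = \sup(\mathrm{DP})$, which combined with weak duality closes the gap and shows $\gamma$ is optimal. Finally, with optimizers $\gamma$ and $(\phi,\psi) = (\psi^c, \phi^c)$ in hand, the pointwise inequality $\phi \oplus \psi \leq c$ together with
\[
\int (\phi \oplus \psi)\, d\gamma = \int \phi\, d\mu + \int \psi\, d\nu = \int c\, d\gamma
\]
forces $\phi(x) + \psi(y) = c(x,y)$ for $\gamma$-almost every $(x,y)$, giving the last assertion.
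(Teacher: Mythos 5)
The paper states this theorem as classical and supplies no proof of its own, so there is no in-paper argument to compare against; your outline is the standard textbook route (weak duality by integration, weak-$\ast$ compactness of $\Pi(\mu,\nu)$, the double $c$-transform plus Arzel\`a--Ascoli to produce a $c$-concave maximizing pair, Hahn--Banach for strong duality, and complementary slackness at the end), and every step except one is correct as sketched.

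The exception is the Hahn--Banach step, where a sign error invalidates the key identity. With your definition
\begin{equation*}
\Theta(u)=\inf\Bigl\{\int_X\phi\,d\mu+\int_Y\psi\,d\nu \,:\, \phi\in C(X),\ \psi\in C(Y),\ u\le\phi\oplus\psi\Bigr\},
\end{equation*}
integrating the constraint $u\le\phi\oplus\psi$ against any $\gamma'\in\Pi(\mu,\nu)$ gives $\Theta(c)\ge\sup_{\gamma'\in\Pi(\mu,\nu)}\int c\,d\gamma'$; in fact $\Theta(c)$ is the dual value of the \emph{maximization} transport problem, not $\sup(\mathrm{DP})$. The asserted equality $\Theta(c)=\sup(\mathrm{DP})$ already fails for $X=Y=\{0,1\}$, $\mu=\nu=\tfrac12(\delta_0+\delta_1)$, $c(x,y)=|x-y|$, where $\sup(\mathrm{DP})=\inf(\mathrm{KP})=0$ but $\Theta(c)=1$. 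The measure $\gamma$ your extension produces therefore satisfies $\int c\,d\gamma=\Theta(c)$ and \emph{maximizes} the transport cost over $\Pi(\mu,\nu)$, which does not close the duality gap for the minimization problem. The repair is routine: run the identical construction for $-c$, i.e.\ extend the linear form $t(-c)\mapsto t\,\Theta(-c)$. Since $\Theta(-c)=-\sup(\mathrm{DP})$ (substitute $(\phi,\psi)\mapsto(-\phi,-\psi)$ in the constraint), the resulting $\gamma\in\Pi(\mu,\nu)$ satisfies $\int c\,d\gamma=\sup(\mathrm{DP})$, and weak duality then yields both the absence of a gap and the optimality of $\gamma$. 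Two smaller points to tighten: the marginal identification requires $\Theta\bigl(\pm(\phi\circ\pi_X)\bigr)\le\pm\int_X\phi\,d\mu$ rather than $\Theta(\pm u)\le0$; and before invoking Hahn--Banach you should verify domination on the whole line $\mathbb{R}c$, i.e.\ $t\,\Theta(c)\le\Theta(tc)$ for $t<0$, which follows from subadditivity via $\Theta(c)+\Theta(-c)\ge\Theta(0)=0$.
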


\subsection{Riemannian manifolds}
In this section, we introduce some notation and recall some known facts on Riemannian manifolds which we
will need later on.  Let $(M,g)$ be a compact connected Riemannian manifold with Riemannian distance
$d(x,y):=d_g(x,y)$. For the rest of this work, we will quite closely follow notation in
\cite{schiavo2020rademacher}.  We denote by $\volg$ the canonical volume measure on $M$ and the set of
all smooth functions on $M$ by $C^\infty(M)$. Moreover, we denote by $\Xz$ the space of all continuous
vector fields on $M$; that is, sections of the tangent bundle $TM$, endowed with the supremum norm
\begin{equation*}
\|w||_{\Xz}:=\sup_M\|w(x)|_g.
\end{equation*}
We denote by $\Xinf$ the algebra of smooth vector fields on $M$. Since $M$ is compact, every smooth
vector field $w\in\Xinf$ generates a unique global flow $\(\flow\)_{t\in\R}$; that is, each $\flow:M\to
M$ is a smooth orientation-preserving diffeomorphism such that

\begin{equation}\label{flow-of-smooth-vector-fields}
\dot{\psy}^{w,t}(x)=w(\flow(x))\quad\&\quad\psy^{w,0}(x)=x,\qquad x\in M,
\end{equation}
where $\dot{\psy}^{w,t}(x)$ is the velocity of the curve $t\mapsto\flow(x)$.

By Theorem \ref{Kantorovich-duality-extension}, the Kantorovich problem on $M$ with the cost function
$c(x,y)=d(x,y)^2$ always admits a solution. Moreover, the following famous theorem of McCann
\cite{mccann2001polar} implies the existence and uniqueness of an optimal  map whenever the source measure
lies in the set $\pac$ of probability measures which are absolutely continuous with respect to $\volg$.

\begin{theorem}[McCann 2001] \label{thm: McCann Monge solutions on manifolds} Let $\mu \in \pac$ and $\nu
\in \pM$.  Then the solution to \eqref{eqn: kantorovich problem} on $X=Y=M$ with $c(x,y) =d^2(x,y)$ is
unique and concentrated on the graph of a function $T:M \rightarrow M$.  This $T$ is in turn the unique
solution to \eqref{eqn: Monge problem}, and takes the form $T(x) =\exp_x(-\nabla \phi(x))$ for a
$\frac{d^2}{2}$ -concave function $\phi: M \rightarrow \mathbb{R}$.
\end{theorem}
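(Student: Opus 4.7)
The plan is to mimic the classical Brenier--McCann argument, with Riemannian gradients and a Riemannian version of Rademacher's theorem in place of their Euclidean counterparts. First I would invoke Theorem~\ref{Kantorovich-duality-extension} on $X = Y = M$ with $c(x,y) = d^2(x,y)$ to produce an optimal plan $\gamma \in \Pi(\mu,\nu)$ together with a pair of $c$-concave Kantorovich potentials $(\phi,\psi)$ satisfying $\phi + \psi \le c$ everywhere and equality $\gamma$-a.e. Since $M$ is compact and $d^2(\cdot,y)$ is Lipschitz uniformly in $y$ (with constant $2\,\mathrm{diam}(M)$), the potential $\phi = \psi^c$, as an infimum of uniformly Lipschitz functions, is itself Lipschitz on $M$. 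Rademacher's theorem for Riemannian manifolds (a standard consequence of its Euclidean version via charts) then yields differentiability of $\phi$ $\volg$-a.e., and hence $\mu$-a.e.\ since $\mu \in \pac$.

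Next, fix $(x,y) \in \supp \gamma$ with $\phi$ differentiable at $x$. The inequality $\phi(x') + \psi(y) \le d^2(x',y)$, saturated at $x' = x$, says that $x' \mapsto d^2(x',y) - \phi(x')$ achieves its minimum at $x$. When $x$ is not a cut point of $y$, the map $d^2(\cdot,y)$ is smooth near $x$ with $\nabla_x \frac{d^2(x,y)}{2} = -\exp_x^{-1}(y)$, so first-order optimality gives $\exp_x^{-1}(y) = -\nabla \tilde\phi(x)$ with $\tilde\phi := \phi/2$ (easily checked to be $\frac{d^2}{2}$-concave), i.e.\ $y = \exp_x(-\nabla \tilde\phi(x)) =: T(x)$. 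Hence for $\mu$-a.e.\ $x$ the slice $\{y : (x,y) \in \supp \gamma\}$ reduces to the single point $T(x)$, which forces $\gamma = (\mathrm{id},T)_\#\mu$. Running the same argument for any other optimal plan (or for a convex combination of two of them, which is again optimal) shows that every minimizer of \eqref{eqn: kantorovich problem} coincides with $(\mathrm{id},T)_\#\mu$, and this simultaneously identifies $T$ as the unique Monge minimizer.

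The main obstacle is the cut locus, where $d^2(\cdot,y)$ ceases to be differentiable and the identity $\nabla_x \frac{d^2(x,y)}{2} = -\exp_x^{-1}(y)$ breaks down. I would handle this via the local semiconcavity of $d^2$: even at cut points $d^2(\cdot,y)$ has a non-empty super-differential, and differentiability of $\phi$ at $x$ forces the a priori multi-valued $c$-super-differential of $\phi$ at $x$ to collapse to a singleton, which still pins down $y$ uniquely and preserves the formula $y = \exp_x(-\nabla\tilde\phi(x))$ under a suitable branch of $\exp_x^{-1}$. Alternatively, a Fubini argument using the fact that $\mathrm{Cut}(y)$ has $\volg$-measure zero for each fixed $y$ shows (after a measurability check) that $\{(x,y) : x \in \mathrm{Cut}(y)\}$ is $\gamma$-negligible, so the smooth argument applies on a set of full $\gamma$-measure.
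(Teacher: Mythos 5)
The paper does not prove this statement; it is quoted as a known result of McCann \cite{mccann2001polar}, so there is no internal proof to compare against. Your sketch is, for all practical purposes, McCann's original argument: duality, Lipschitz continuity and hence $\volg$-a.e.\ (so $\mu$-a.e., since $\mu\in\pac$) differentiability of the $c$-concave potential via Rademacher, first-order conditions at points of $\supp\gamma$ where the potential is differentiable, and the standard convex-combination argument for uniqueness of the plan (and hence of the map). Your primary route for the cut locus is also the correct one: since $x'\mapsto d^2(x',y)$ is locally semiconcave and $x'\mapsto d^2(x',y)-\phi(x')$ is minimized at $x$ where $\phi$ is differentiable, $\nabla\phi(x)$ lies in the subdifferential of $d^2(\cdot,y)$ at $x$, and a semiconcave function with nonempty subdifferential at a point is differentiable there; this forces $x\notin\mathrm{Cut}(y)$ (or $x=y$) and validates the identity $y=\exp_x(-\nabla\tilde\phi(x))$. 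Modulo the usual measurability bookkeeping for $T$, the argument is sound.

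One caveat: your ``alternative'' Fubini argument does not work as stated. The plan $\gamma$ is not a product measure, so the fact that $\volg(\mathrm{Cut}(y))=0$ for each fixed $y$ (hence $\mu(\mathrm{Cut}(y))=0$) does not imply $\gamma(\{(x,y):x\in\mathrm{Cut}(y)\})=0$: disintegrating $\gamma$ over its second marginal, the conditionals $\gamma_y$ need not be absolutely continuous with respect to $\mu$, and nothing a priori prevents $\gamma_y$ from charging $\mathrm{Cut}(y)$ (the antipodal coupling on a round sphere is a transport plan concentrated entirely on this set). Excluding the cut locus genuinely requires the optimality/semiconcavity argument, which is exactly why McCann's proof goes through the superdifferential of $d^2$ rather than a purely measure-theoretic reduction. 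As long as you rely on the first route, the proof stands.
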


\section{The space of Borel probability measures on a Riemannian manifold}
We now turn our attention to the space $\pM$ of probability measures on a Riemannian manifold $M$.
\subsection{Topological properties of \texorpdfstring{$\pM$}{P(M)}} Since $(M,g)$ is a compact metric
space, the space of all finite signed Borel measures on $M$, $\cal M(M)$, endowed with the total
variation norm, can be identified with the dual of $C(M)=C_b(M)$, thanks to the
Riesz--Markov--Kakutani theorem. Furthermore, $\pM$ is a weak$^*$ closed subset of the unit
sphere of $\cal M(M)$ and so by the Banach--Alaoglu theorem is weak$^*$ compact. Every $f\in
C_b(M)$ induces a bounded linear functional
\begin{equation*}
f^*:\PM\rightarrow\R,\qquad f^*\mu=\int_M f\,d\mu,
\end{equation*}
on $\pM$ which is called a \emph{potential energy}. When $f\in C^\infty(M)$ the corresponding $f^*$ is
called a \emph{smooth potential energy}. Motivated by studies on configuration spaces
\cite{decreusefond2008wasserstein}, there is
a class of smooth functions on $\pM$ which is relevant to the rest of our analysis.
\begin{definition}
A function $u:\PM\rightarrow\R$ is called a ``cylinder'' function if there exists a positive integer $k>
0$, a smooth function $F$ on $\R^k$ and $k$ smooth functions $f_1,f_2,\cdots,f_k\in C^\infty(M)$ such
that
\begin{equation*}
u(\mu)=F(f_1^*\mu,f_2^*\mu,\cdots,f_k^*\mu).
\end{equation*}
We denote the set of all cylinder functions on $\PM$ by $\smoothFP$.
\end{definition}
The fact that the Kantorovich problem on $M$ always admits a solution implies that for every two measures
$\mu,\nu\in\pM$, the cost of the transportation is a finite value. In fact, this value induces a metric
\begin{equation*}
W_2(\mu,\nu):=\(\int_{X\times Y}d(x,y)^2\,d\gamma(x,y)\)^\half,
\end{equation*}
where $\gamma$ is optimal in \eqref{eqn: kantorovich problem}, on the space $\pM$, known as the
\emph{$L^2$-Wasserstein metric}. The $L^2$-Wasserstein metric metrizes the weak$^*$ topology on $\pM$
\cite[Theorem 7.12]{villani2021topics} and turns it into a compact metric space itself. This space $(\pM,
W_2)$ is called the $L^2$-Wasserstein space over $M$.

\subsection{The Geometry of \texorpdfstring{$\PM$}{P(M)}}
We now turn to some geometric properties induced on $\pM$ by the metric $W_2$.

The Wasserstein space is often viewed as an infinite-dimensional manifold. The geometry and smooth
structure of this space have been well studied from different viewpoints by many researchers
\cite{ambrosio2008gradient, lott2008some, gigli2011inverse, gigli2009second, gangbo2011differential}. It
is well-known that $\PM$ is a geodesic space and one can identify the tangent space of $\PM$ at a point
$\mu$, denoted by $T_{\mu}^{\mathrm{Der}}\PM$, with the space $\Xmu:=\obar{\Xinf}^{L^2(M,\mu)}$, that is,
the abstract linear completion of $\Xinf$ with respect to the norm $\norm_{\Xmu}$ induced by the Hilbert
inner product
\begin{equation*}
\<w_0,w_1\>_\Xmu:=\int_M\<w_0(x),w_1(x)\>_g\,d\mu(x),
\end{equation*}
for every $w_0,w_1\in\Xinf$. Under this identification, every smooth vector field $w\in\Xinf$ induces a
canonical directional derivative for every cylinder function $u\in\smoothFP$. Recalling the unique global
flow
\eqref{flow-of-smooth-vector-fields} generated by $w$,  observe that $\psy^{w,t}$ induces a map on
$\PM$ via push-forward, that is,
\begin{equation*}
\Psi^{w,t}:\PM\rightarrow\PM,\qquad\Psi^{w,t}:=\psy^{w,t}_\#.
\end{equation*}
Then a straightforward calculation \cite[Lemma 6.2]{schiavo2020rademacher} shows that for every
$u\in\smoothFP$
\begin{equation*}
(\nabla_w u)(\mu):=\ddt{\Big|_{t=0}}(u\compose\Psi^{w,t})(\mu)= {\<\nabla u(\mu),w\>}_{\Xmu},
\end{equation*}
where $\nabla u$ is defined via\footnote{For justification on this definition, we refer the reader to
Dello Schiavo \cite[Page 5]{schiavo2020rademacher}.}
\begin{equation*}
\nabla u(\mu)(x):=\sum_i^k(\partial_iF)(f_1^*\mu,f_2^*\mu,\cdots,f_k^*\mu)\nabla f_i(x).
\end{equation*}
We next establish a simple lemma which we will require later.
\begin{lemma}\label{forward-compatibility} Let $\mu\in\pac$ and $w$ be a smooth vector field on $M$,
that is $w\in\Xinf$. Then $\mu_t:=\Psi^{w,t}(\mu)\in\pac$ for every $t\in\R$. Moreover, $\mu_t$ satisfies
the continuity equation
\begin{equation}\label{continuity-eq}
\dfrac{\partial\mu_t}{\partial t}+\nabla\cdot(\xi_t\mu_t)=0,
\end{equation}
where $\xi_t(x)=w(\psy^{w,t}(x))$.
\end{lemma}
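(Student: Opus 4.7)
The proof has two parts, and both rest on smoothness and compactness. Since $w \in \Xinf$ and $M$ is compact, the flow $\psy^{w,t}$ in \eqref{flow-of-smooth-vector-fields} is a smooth diffeomorphism of $M$ for every $t \in \R$, with Jacobian determinant $J_t(x) := \det D\psy^{w,t}(x)$ smooth in $(t,x)$ and, by compactness, uniformly bounded above and below on any compact time interval.

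For the absolute continuity claim, I would argue by preservation of null sets: a diffeomorphism of $M$ with bounded Jacobian sends $\volg$-null sets to $\volg$-null sets. Thus, for any Borel $A \subseteq M$ with $\volg(A) = 0$, one has $\volg(\psy^{w,-t}(A)) = 0$, so $\mu(\psy^{w,-t}(A)) = 0$ by $\mu \in \pac$. Hence
\begin{equation*}
\mu_t(A) = (\psy^{w,t})_\# \mu(A) = \mu\bigl((\psy^{w,t})^{-1}(A)\bigr) = \mu(\psy^{w,-t}(A)) = 0,
\end{equation*}
and so $\mu_t \in \pac$, as claimed. (Alternatively, the change-of-variables formula exhibits an explicit density for $\mu_t$ in terms of $J_t$.)

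For the continuity equation, I would verify the weak form by testing against an arbitrary $\phi \in C^\infty(M)$. Uniform smoothness of $\phi \circ \psy^{w,t}$ in $(t,x)$ on compact time intervals justifies differentiation under the integral sign, and the chain rule together with the defining ODE $\dot{\psy}^{w,t}(x) = w(\psy^{w,t}(x)) = \xi_t(x)$ yields
\begin{equation*}
\frac{d}{dt}\int_M \phi\, d\mu_t = \frac{d}{dt}\int_M \phi\circ\psy^{w,t}\, d\mu = \int_M \bigl\langle (\nabla\phi)(\psy^{w,t}(x)),\, \xi_t(x)\bigr\rangle_g\, d\mu(x).
\end{equation*}
A change of variables $y = \psy^{w,t}(x)$ rewrites the right-hand side as the natural pairing of $\nabla \phi$ against the Eulerian velocity induced by $\xi_t$, integrated against $\mu_t = (\psy^{w,t})_\# \mu$; an integration by parts against the arbitrary $\phi$ then produces \eqref{continuity-eq} in the distributional sense.

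The only step warranting any care is the interchange of differentiation and integration in the second part, which is immediate from uniform smoothness of $\psy^{w,t}$ and compactness of $M$, so I foresee no real obstacle in either half of the argument.
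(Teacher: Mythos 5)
Your proof is correct, and its first half is the same argument the paper gives, only written out in detail: the paper simply notes that $\psy^{w,t}$ is a diffeomorphism of the compact manifold $M$ and concludes that the pushforward preserves absolute continuity with respect to $\volg$; your null-set/Jacobian argument is exactly the content behind that one line. For the continuity equation the routes differ: the paper cites Theorem 5.34 of Villani's book (pushforwards along flows of suitable velocity fields solve the continuity equation), whereas you verify the weak formulation directly by differentiating $t\mapsto\int_M\phi\circ\psy^{w,t}\,d\mu$ under the integral sign and changing variables. Your computation is essentially the proof of the cited theorem specialized to a smooth autonomous field on a compact manifold, so it buys self-containedness at no real cost. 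One point deserves care: after the change of variables $y=\psy^{w,t}(x)$, your computation produces $\int_M\langle\nabla\phi(y),w(y)\rangle_g\,d\mu_t(y)$, i.e. the weak form of $\partial_t\mu_t+\nabla\cdot(w\,\mu_t)=0$ with the \emph{Eulerian} velocity $w$, not literally the field $\xi_t(x)=w(\psy^{w,t}(x))$ appearing in the lemma's display (that $\xi_t$ is the Lagrangian velocity; the Eulerian field it induces is $\xi_t\circ\psy^{w,-t}=w$). The two coincide at $t=0$, which is the only instant the paper ever uses, so this is a quirk of the statement rather than a gap in your argument --- but your phrase ``the natural pairing \dots against the Eulerian velocity induced by $\xi_t$'' is doing real work there and should be made explicit.
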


\begin{prf}
Since $M$ is compact, by Eq.~(\ref{flow-of-smooth-vector-fields}), $\psy^{w,t}$ is a diffeomorphism and
therefore $\mu_t=\Psi^{w,t}(\mu)$ is absolutely continuous with respect to $\volg$ for every $t\in\R$.
For the second part of the lemma, one should observe that, since $\psy^{w,t}$ is a diffeomorphism,
Theorem 5.34 \cite{TrigilaTabak2016} implies that $\mu_t$ satisfies the continuity equation
(\ref{continuity-eq}).\\
\endofprf%
\end{prf}

We next state two results on derivatives along curves in the Wasserstein space.  The first one regards
differentiation of the squared Wasserstein distance:
\begin{theorem}[Derivative of the Wasserstein distance]\label{Wasserstein-derivative} 
Let $M$ be a Riemannian manifold. Let $\mu\in\pac$ and $\nu \in \cal P(M)$.  For a smooth vector field $w
\in\Xinf$, set $\mu_t = \Psi^{w,t}(\mu)$.  Then
\begin{equation}\label{Wasserstein-der}
\dfrac{d}{dt}\Big|_{t=0} W_2^2(\mu_t,\nu) = 2\int_M\<\nabla\phi(x),w(x)\>_g\,d\mu(x),
\end{equation}
where $\phi$ is a $(d^2/2)$-concave function such that 
\begin{equation*}
\nu=\Exp(-\nabla\phi)_\#\mu.
\end{equation*}
\end{theorem}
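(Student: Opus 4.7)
The plan is to sandwich the incremental quotient $t^{-1}[W_2^2(\mu_t,\nu) - W_2^2(\mu,\nu)]$ (which I take to be what is meant by the stated left-hand side, since the factor of $2$ on the right reflects differentiating the squared distance) between matching one-sided bounds: a candidate-map upper bound on $W_2^2(\mu_t,\nu)$ and a Kantorovich-duality lower bound. By Theorem \ref{thm: McCann Monge solutions on manifolds}, the unique optimal transport map from $\mu$ to $\nu$ is $T(x) = \Exp_x(-\nabla\phi(x))$, so $T\compose(\psy^{w,t})^{-1}$ transports $\mu_t = \Psi^{w,t}(\mu)$ to $\nu$ and supplies the upper bound, while the optimal $d^2$-Kantorovich potentials for $(\mu,\nu)$, namely $(2\phi,\,2\phi^c)$, remain admissible competitors for $(\mu_t,\nu)$ and supply the lower bound.

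For the upper bound, a change of variables gives
\begin{equation*}
W_2^2(\mu_t,\nu) \;\leq\; \int_M d^2\bigl(\psy^{w,t}(x),\, T(x)\bigr)\, d\mu(x),
\end{equation*}
with equality at $t=0$. I would differentiate the right-hand side in $t$ by exchanging the derivative with the integral, which is legitimate since $\psy^{w,t}(x)$ is jointly smooth in $(t,x)$ and $M$ is compact. The pointwise derivative at $t=0$ is $\<\nabla_x d^2(x,T(x)),\, w(x)\>_g$, and a standard Riemannian identity --- the gradient in the base point of half the squared distance to a fixed target equals minus the initial velocity of the connecting geodesic --- gives $\nabla_x\tfrac{1}{2}d^2(x,T(x)) = \nabla\phi(x)$ wherever $\phi$ is differentiable. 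Dividing by $t>0$ yields $\limsup_{t\to 0^+}t^{-1}[W_2^2(\mu_t,\nu)-W_2^2(\mu,\nu)] \leq 2\int_M \<\nabla\phi(x), w(x)\>_g\, d\mu(x)$.

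For the matching lower bound, Theorem \ref{Kantorovich-duality-extension} applied at $(\mu_t,\nu)$ yields
\begin{equation*}
W_2^2(\mu_t,\nu) \;\geq\; 2\int_M \phi\bigl(\psy^{w,t}(x)\bigr)\, d\mu(x) + 2\int_M \phi^c\, d\nu,
\end{equation*}
with equality at $t=0$. Subtracting and dividing by $t>0$ reduces matters to controlling the integrated difference quotient of $\phi$ along the flow $\psy^{w,t}$. The main technical obstacle here is that $\phi$ is only locally Lipschitz, not smooth, so the ordinary chain rule does not apply directly; this is what forces an absolute continuity hypothesis on $\mu$. However, any $(d^2/2)$-concave function on the compact manifold $M$ is globally Lipschitz, and since $\mu\in\pac$, Rademacher's theorem on $M$ gives that $\phi$ is differentiable $\mu$-almost everywhere. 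At such points the chain rule yields $\lim_{t\to 0}t^{-1}[\phi(\psy^{w,t}(x)) - \phi(x)] = \<\nabla\phi(x), w(x)\>_g$, while the uniform estimate $|\phi(\psy^{w,t}(x))-\phi(x)| \leq L\, d(\psy^{w,t}(x),x) \leq L\,|t|\,\sup_M|w|_g$ dominates the difference quotient. Dominated convergence then delivers $\liminf_{t\to 0^+}t^{-1}[W_2^2(\mu_t,\nu)-W_2^2(\mu,\nu)] \geq 2\int_M \<\nabla\phi(x), w(x)\>_g\, d\mu(x)$, pinching the one-sided derivative to the claimed value; the argument for $t<0$ is symmetric.
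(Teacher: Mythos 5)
Your proof is correct, but it takes a genuinely different route from the paper: the paper disposes of the statement by citation, invoking Lemma 4.3 of \cite{schiavo2020rademacher} when $\nu\neq\mu$ and Step 2 of the proof of Theorem 23.9 in \cite{villani2009} when $\nu=\mu$ (using Lemma \ref{forward-compatibility} to check the continuity equation), whereas you reconstruct the classical sandwich argument from scratch. Your two one-sided bounds --- the candidate plan $(\psy^{w,t},T)_\#\mu$, which couples $\mu_t$ with $\nu$ and gives $W_2^2(\mu_t,\nu)\le\int_M d^2(\psy^{w,t}(x),T(x))\,d\mu(x)$ from above, and the fixed admissible pair $(2\phi,2\phi^c)$ in \eqref{eqn: dual problem}, which bounds from below --- touch at $t=0$ by optimality of $T$ and of $(\phi,\phi^c)$, and both difference quotients converge to $2\int_M\<\nabla\phi,w\>_g\,d\mu$ by dominated convergence, exactly as you say; your reading of the left-hand side of \eqref{Wasserstein-der} as the derivative of the \emph{squared} distance is also the intended one (this is how the formula is used in the proof of Theorem \ref{Monge-extension} and in Section 5). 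What your version buys is self-containedness, a uniform treatment of the case $\nu=\mu$ (where $\nabla\phi=0$ and both sides vanish), and an explicit display of where the hypothesis $\mu\in\pac$ enters, namely through Rademacher's theorem on $M$ applied to the Lipschitz function $\phi$. The one step you pass over a little quickly is the pointwise derivative in the upper bound: $d^2(\cdot,y)$ fails to be differentiable on the cut locus of $y$, so you need that for $\mu$-a.e.\ $x$ the map $x'\mapsto d^2(x',T(x))$ is differentiable at $x'=x$ with gradient $2\nabla\phi(x)$. This does follow from your ``wherever $\phi$ is differentiable'' remark --- at such $x$ the function $x'\mapsto \tfrac{1}{2}d^2(x',T(x))-\phi(x')$ is minimized at $x'=x$, so $\nabla\phi(x)$ is a subgradient of the everywhere superdifferentiable (semiconcave) function $\tfrac{1}{2}d^2(\cdot,T(x))$, forcing differentiability there --- but this is precisely the content of McCann's first-order analysis in \cite{mccann2001polar} and should be cited or spelled out, rather than attributed to a ``standard Riemannian identity'' which on its own is only valid off the cut locus.
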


Differentiability of the squared Wasserstein distance is certainly well known; see, for example,
\cite[Theorem 8.13]{villani2021topics}, \cite[Theorem 23.9]{villani2009}, \cite[Corollary
10.2.7]{ambrosio2008gradient}. We were unable to find an exact reference for the version we use here, and
so provide a brief proof.

\begin{proof}
The result follows directly from Lemma 4.3 in \cite{schiavo2020rademacher} when $\nu \neq \mu$.  If $\nu
=\mu$, then $\nu$ is absolutely continuous with respect to volume, and so the result follows from Step 2
in the proof of Theorem 23.9 in \cite{villani2009} (noting by Lemma \ref{forward-compatibility} that
$\mu_t$ satisfies the continuity equation).
\end{proof}

The second differentiation result we will need applies to Lipschitz functions on $\PM$ and is a recent,
deep result of Dello Schiavo \cite{schiavo2020rademacher}. It amounts to a type of Rademacher theorem on
$\PM$ and its statement requires some additional assumptions. In particular, a statement of a Rademacher
type result requires a reference measure on the base space (Wasserstein space in our case) satisfying
certain properties. The assumption we will need is as follows.
 
\begin{assumption}\label{assumption: reference measures}
Let $\RP_0 \in \mathcal{P}(\PM)$ be a probability measure on the Wasserstein space satisfying:
\begin{itemize}
\item $\RP_0$ has no atoms.
\item $\RP_0$ verifies the following integration-by-parts formula. If $u,v\in\smoothFP$ and $w\in\Xinf$,
then there exists a measurable function $\mu\mapsto\nabla^*_w v\in\Xmu$ such that
\begin{equation*}
\int_{\PM} \nabla_w u(\mu)\cdot v\,d\RP_0(\mu)= \int_{\PM} u\cdot\nabla^*_w v(\mu)\,d\RP_0(\mu).
\end{equation*}
\item $\RP_0$ is quasi-invariant with respect to the action of the flows generated by $\Xinf$; that is, for all $w\in\Xinf$, $\RP_0$ and $\Psi^{w,t}_{\#}\RP_0$ are mutually absolutely continuous.  Furthermore, the Radon-Nikodym derivative
\begin{equation*}
R^w_r:=\frac{d\big(\Psi^{w,r}_{\#}\RP_0\big) \otimes dr}{d\RP_0 \otimes dr}, \text{  }r \in \mathbb{R}
\end{equation*}
satisfies, for $\RP_0$ a.e $\mu$,
\begin{equation*}
\mathcal{L}^1\text{-essinf}_{r \in (s, t)}R^w_r(\mu) >0
\end{equation*}
for all $s,t \in \mathbb{R}$ with $s \leq t$.
\end{itemize}
\end{assumption}
\begin{remark}
Although the preceding assumption appears fairly abstract, it can be verified on many concrete examples;
see \cite{schiavo2020rademacher}.
\end{remark}

The statement of the main Theorem from \cite{schiavo2020rademacher} will require the following concept.
\begin{definition}
We say that a probabability measure $\RP_0 \in \mathcal{P}(\PM)$ satisfies the Rademacher property if for
every $W_2$-Lipschitz continuous functions $U:\PM\rightarrow\R$ there exists a measurable section
\emph{$\bD U$} of $T^{\mathrm{Der}}\PM$, satisfying
\begin{equation}\label{eqn: L^2 limit}
\lim_{t\to 0}\dfrac{U(\Psi^{w,t}(\cdot)) - U(\cdot)}{t} = \<\emph{\bD}
U(\cdot),w\>_{\mathfrak{X}_{(\cdot)}}\quad \text{in}\quad L^2\(\PM,\RP_0\),
\end{equation}
for every $w\in\Xinf$.
\end{definition}

The full version of the following Theorem is found in \cite[Theorem 2.10]{schiavo2020rademacher}; in the
interest of brevity, and in an attempt to keep our arguments as transparent as possible, we include here
only the conclusion we will explicitly need.

\begin{theorem}[Dello Schiavo 2020]\label{Schiavo-der}
Let $\RP_0\in\mathcal{P}(\PM)$ satisfy Assumption \ref{assumption: reference measures}. Then it satisfies
the Rademacher property.
\end{theorem}
\begin{lemma}\label{cor: pointwise differentiability}
Assume that $\mathbb{P}_0$ satisfies the Rademacher property. Let $U:\PM\rightarrow\R$ be $W_2$-Lipschitz
and $ \{w_i\}_{i=1}^\infty$ a countable set of smooth vector fields on $M$. Then for $\mathbb{P}_0$
almost every $\mu$, we have
\begin{equation}\label{eqn: pointwise limit}
\lim_{t\to 0}\dfrac{U(\Psi^{w_i,t}(\mu)) - U(\mu)}{t} = \<\emph{\bD} U(\mu),w_i\>_\Xmu,\quad
\end{equation}
for all $i\in\mathbb{N}$.
\end{lemma}
\begin{proof}
The Rademacher property implies that for each $w_i$, \eqref{eqn: pointwise limit} holds for
$\mathbb{P}_0$ almost every $\mu$. That is,
\begin{equation*}
\mathbb{P}_0(\{\mu: \eqref{eqn: pointwise limit} \text{ fails for } w_i \}) =0.
\end{equation*}
Therefore,     
\begin{equation*}
\mathbb{P}_0(\{\mu: \eqref{eqn: pointwise limit}\text{ fails for some } w_i\})=\mathbb{P}_0(
\cup_{i=1}^{\infty}\{\mu: \eqref{eqn: pointwise limit} \text{ fails for } w_i \})=0,
\end{equation*}
as desired.
\end{proof}

The following is a special case of a well known result in optimal transport theory, and, combined with
the above result, will imply differentiability of Kantorovich potentials (that is, solutions to
\eqref{eqn: dual problem}) on $\PM$ in a certain sense.  The analogous result on $M$ (rather than $\PM$)
was originally established by McCann \cite{mccann2001polar}; the version below can be seen as an
immediate consequence of the Proposition in \cite[Box 1.8]{santambrogio2015optimal}.
\begin{lemma}\label{lem: Lipschitz potentials}
 Let $U:\PM \rightarrow \mathbb{R}$ be $W_2^2$-concave.  Then $U$ is $W_2$-Lipschitz.    
\end{lemma}

\section{The optimal transportation problem on the space \texorpdfstring{$\PM$}{PM}}
We are now ready to state and prove our main theorem, which concerns the optimal transport problem on
the Wasserstein space itself.  That is, in \eqref{eqn: kantorovich problem}, we take $X=Y=\PM$ for
a compact Riemannian manifold, and $c(\mu,\nu) = W_2^2(\mu,\nu)$.  Note that $X$ and $Y$ are then
compact.

To distinguish between measures \emph{in} $\PM$ (that is, measures on $M$) and measures \emph{on} $\PM$
(that is, elements of $\mathcal{P}(\PM)$), we will continue to use Greek letters such as $\mu$ and $\nu$
for the former and will use upper case blackboard letters such as $\mathbb{P}_1$ and $\mathbb{P}_2$ to
denote the latter.

In the Theorem below, we assume the Rademacher property on the reference measure $\RP_0$.  The work of
Dello Schiavo \cite{schiavo2020rademacher}, summarized in Theorem \ref{Schiavo-der} above, identifies
conditions under which the Rademacher property holds (Assumption \ref{assumption: reference measures}).
These conditions are not strictly necessary, however; there are examples of references measures violating
them for which the Rademacher property holds, including the Dirichlet-Ferguson measure
\cite{DelloSchiavo2022}\footnote{Note, however, that Theorem \ref{Monge-extension} does not apply if
$\RP_0$ is the Dirichlet-Ferguson measure, as it concentrates on purely atomic measures, so that the
assumption $\RP_1(\pac)=1$ is violated for any measure $\RP_1$ which is absolutely continuous with
respect to $\RP_0$.}.  We therefore state our result under the Radmacher property assumption rather than
Assumption \ref{assumption: reference measures}, making it potentially more widely applicable.
\begin{theorem}\label{Monge-extension}
Let $\RP_1,\RP_2$ be Borel probability measures on $X:=\PM$ and $Y:=\PM$, respectively,  $\PM$ be
equipped with a reference measure $\RP_0$ satisfying the Rademacher property and
$c(\mu,\nu)=W_2^2(\mu,\nu)$ for every $\mu,\nu\in\pM$. Assuming that $\RP_1$ is absolutely continuous
with respect to $\RP_0$ and $\RP_1(\pac)=1$, the optimal transportation problem \eqref{eqn: kantorovich
problem} between $(X,\RP_1)$ and $(Y,\RP_2)$ under this cost function admits a unique solution
$\mathbb{P}_{1,2}$ which is concentrated on the graph of a map $T:X\to Y$; that is, $T(\mu)=\nu$ for
$\RP_{1,2}$ $\mathrm{ a.e.}\,(\mu,\nu)$ and $T_\#\RP_1=\RP_2$. Furthermore, $T$ is the unique solution to
\eqref{eqn: Monge problem}.
\end{theorem}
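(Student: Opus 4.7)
The plan is to follow the classical Brenier--McCann strategy with Schiavo's Rademacher-type theorem substituting for the usual Rademacher theorem. Since $X = Y = \PM$ is compact and $c = W_2^2$ is continuous, Theorem \ref{Kantorovich-duality-extension} furnishes an optimal plan $\RP_{1,2}$ and Kantorovich potentials $(\Phi,\Psi)$ with $\Phi = \Psi^c$ and $\Psi = \Phi^c$ both $c$-concave, so by Lemma \ref{lem: Lipschitx potentials}, $\Phi$ is $W_2$-Lipschitz. Fix a countable family $\{w_i\}_{i=1}^\infty \subseteq \Xinf$ that is dense in the supremum norm (possible because $M$ is compact). Corollary \ref{cor: pointwise differentiability} then produces a $\RP_0$-full measure set on which the directional derivatives $\<\bD\Phi(\mu),w_i\>_\Xmu$ exist as the pointwise limits \eqref{eqn: pointwise limit} for every $i$. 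Combining with $\RP_1 \ll \RP_0$, $\RP_1(\pac) = 1$, and the duality identity $\Phi(\mu) + \Psi(\nu) = W_2^2(\mu,\nu)$ holding $\RP_{1,2}$-a.e., I would extract a $\RP_{1,2}$-full measure Borel set $\Gamma$ on which all three properties hold simultaneously.

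For any $(\mu,\nu)\in\Gamma$, $c$-concavity of $\Phi$ gives $W_2^2(\mu',\nu)-\Phi(\mu')\geq\Psi(\nu)$ for every $\mu'$, with equality at $\mu'=\mu$. Applying this inequality along the curve $t\mapsto\Psi^{w_i,t}(\mu)$, Theorem \ref{Wasserstein-derivative} (using $\mu\in\pac$) differentiates the $W_2^2$ term at $t=0$, while Corollary \ref{cor: pointwise differentiability} differentiates the $\Phi$ term. Since $t=0$ is a minimum, vanishing of the derivative yields
\begin{equation*}
2\<\nabla\phi_\mu,w_i\>_\Xmu = \<\bD\Phi(\mu),w_i\>_\Xmu \qquad \text{for every } i,
\end{equation*}
where $\phi_\mu$ is the $\frac{d^2}{2}$-concave function on $M$ supplied by Theorem \ref{thm: McCann Monge solutions on manifolds} for which $\nu = \Exp(-\nabla\phi_\mu)_\#\mu$. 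Density of $\{w_i\}$ in $\Xinf$ in the supremum norm, and hence in $\Xmu$ (since uniform convergence implies $L^2(\mu)$ convergence), upgrades this to the identity $2\nabla\phi_\mu = \bD\Phi(\mu)$ in $\Xmu$.

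The crucial consequence is that $\bD\Phi(\mu)$ is intrinsic to $\mu$, so the identity determines $\nabla\phi_\mu$ and therefore $\nu=\Exp(-\nabla\phi_\mu)_\#\mu$ uniquely from $\mu$ alone. Defining $T(\mu):=\Exp\bigl(-\half\bD\Phi(\mu)\bigr)_\#\mu$, I conclude that every optimal plan concentrates on the graph of $T$; the condition $T_\#\RP_1=\RP_2$ is inherited from the marginal condition on $\RP_{1,2}$. Uniqueness of the optimal plan follows from the usual averaging argument: any two optimal plans have an optimal midpoint, which is graph-concentrated, forcing the two to coincide. Solvability and uniqueness of \eqref{eqn: Monge problem} are then immediate from the graph structure.

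The delicate point is the first-order optimality step, where one needs Schiavo's directional derivatives (which are a priori only $L^2(\RP_0)$ limits) to hold \emph{pointwise} at those particular $\mu$ coupled to a given $\nu$, and simultaneously for enough vector fields to recover $\bD\Phi(\mu)$ in $\Xmu$. Corollary \ref{cor: pointwise differentiability} handles both at once by producing a common full-measure set on which the pointwise limit holds for every vector field in a countable family, and the separability of $\Xinf$ coming from compactness of $M$ supplies a family dense enough to identify $\bD\Phi(\mu)$.
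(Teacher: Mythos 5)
Your proposal is correct and follows essentially the same route as the paper: Kantorovich duality with $W_2^2$-concave potentials, Lipschitzness via Lemma \ref{lem: Lipschitx potentials}, pointwise differentiability on a countable dense family of vector fields via Corollary \ref{cor: pointwise differentiability}, first-order optimality combined with Theorem \ref{Wasserstein-derivative} to identify $2\nabla\phi_\mu=\bD\Phi(\mu)$, McCann's theorem to recover $\nu$ from $\mu$, and the standard averaging argument for uniqueness. The only cosmetic difference is that you extract a full-measure set $\Gamma$ in the product space rather than fixing $\mu$ and arguing that only one $\nu$ can achieve equality, which amounts to the same thing.
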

\begin{prf}
Let $\mathbb{P}_{1,2}\in\mathcal{P}(\PM \times \PM)$ solve the Kantorovich problem \eqref{eqn:
kantorovich problem} and $U$ and $V$ be $W_2^2$-concave functions solving the dual problem.  We then
have, by Theorem \ref{Kantorovich-duality-extension} for all $\mu,\nu\in\PM$,
\begin{equation}\label{eqn: dual inequality}
U(\mu) + V(\nu) \leq W_2^2(\mu,\nu),
\end{equation}
with equality $\mathbb{P}_{1,2}$ almost everywhere.  $U$ is Lipschitz by Lemma \ref{lem: Lipschitz
potentials}. Fix a countable $W=\{w_i\}_i \subseteq \Xinf$ which is dense in $\Xz$ with respect to the
sup norm. Then Corollary \ref{cor: pointwise differentiability} implies that \eqref{eqn: pointwise limit}
holds $\mathbb{P}_0$, and therefore $\mathbb{P}_1$, almost everywhere for all $w \in W$. Fix any $\mu$
where this holds; our goal is to show that there is only one $\nu$ where equality holds in \eqref{eqn:
dual inequality}. To this end, suppose equality holds for some fixed $\nu$. For a fixed $w \in W$,
letting $\mu_t=\Psi^{w,t}(\mu)$, consider the function
\begin{equation*}
\alpha(t) =U(\mu_t) - W_2^2(\mu_t,\nu). 
\end{equation*}
Since $\alpha(t) \leq -V(\nu)$ for all $t$ by \eqref{eqn: dual inequality}, and $\alpha(0) =V(\nu)$ by
assumption (as $\mu_0=\mu)$, $\alpha$ is maximized at $t=0$. Corollary \ref{cor: pointwise
differentiability} implies the differentiability of $t\mapsto U(\mu_t)$ at $t=0$ and Theorem
\ref{Wasserstein-derivative} implies that $t \mapsto W_2^2(\mu_t,\nu)$ is differentiable at $t=0$. It
follows that $\alpha$ is differentiable at $0$, and
\begin{equation}\label{derivatives-equality-result}
\int_M\<\bD U(\mu)(x)-2\nabla\phi_{\nu}(x),w(x)\>_g\,d\mu(x)=\alpha'(0) =0,
\end{equation}
where $\bD U$ is as in Theorem \ref{Schiavo-der} and $\phi_{\nu}$ is the Kantorovich potential
corresponding to the optimal transport between $\mu$ and $\nu$. Since this holds for all $w \in W$, it
holds for all $w \in \Xz$ by density, which implies that
\begin{equation*}
\nabla\phi_\nu(x) =\frac{1}{2}\bD U(\mu)(x)
\end{equation*}
for $\mu$ almost every $x$. But Theorem \ref{thm: McCann Monge solutions on manifolds} then implies that
$\nu$ is uniquely determined by $\mu$ as desired; more precisely, we must have
\begin{equation*}
\nu=\exp\(-\frac{1}{2}\bD U(\mu)\)_{\#}\mu=:T(\mu).
\end{equation*}
This shows that $\mathbb{P}_{1,2}$ concentrates on the graph of the transport map $T$. Uniqueness of the
optimal $\mathbb{P}_{1,2}$ in \eqref{eqn: kantorovich problem} then follows by a very standard argument.
If $\mathbb{P}_{1,2}$ and $\mathbb{P}_{1,2}'$ are both optimal, then by linearity so is
$\frac{1}{2}[\mathbb{P}_{1,2} + \mathbb{P}_{1,2}']$.  The argument above implies that $\mathbb{P}_{1,2},
\mathbb{P}_{1,2}'$ and $\frac{1}{2}[\mathbb{P}_{1,2} + \mathbb{P}_{1,2}']$ are all concentrated on maps
over $\mathbb{P}_1$, which is impossible unless $\mathbb{P}_{1,2}= \mathbb{P}_{1,2}'$.
\endofprf%
\end{prf}
\begin{remark}
The assumption $\RP_1(\pac)=1$ may be relaxed to require only that $\RP_1$ almost every $\mu$ does not
charge $n-1$ dimensional rectifiable sets, or, even further, that $\RP_1$ almost every $\mu$ satisfies
the sharp condition for the existence and uniqueness of optimal maps identified in
\cite{gigli2011inverse}, since it is well known that the conclusion of Theorem \ref{thm: McCann Monge
solutions on manifolds} holds under either of these hypotheses.  Thus the conclusion of Theorem
\ref{Monge-extension} holds when $\RP_0$ is, for example, the entropic measure on $\mathcal{P}(\mathbb
S^1)$ studied in \cite{VonRenesseSturm2009}, which is concentrated on non-atomic singular measures.
\end{remark}
\section{Extension to more general functions of the Wasserstein distance}
The result of Theorem~\ref{Monge-extension} can be further extended to a class of cost functions, namely,
the class of all functions of the form $c(\mu,\nu)=h(W_2(\mu,\nu))$ where $h:
\mathbb{R}_+ \rightarrow
\mathbb{R}$ is $C^2$-smooth, strictly increasing and strictly convex, as the following theorem asserts.
\begin{theorem}
Let $\RP_1,\RP_2$ be Borel probability measures on $X=Y=\PM$, where $\PM$ is equipped with a reference
measure $\RP_0$ satisfying the Rademacher property, such that  $\RP_1$ is absolutely continuous with
respect to $\RP_0$ and $\RP_1(\pac)=1$.  Let the cost function be defined by $c(\mu,\nu)=h(W_2(\mu,\nu))$
for every $\mu,\nu\in\pM$ where $h: \mathbb{R}_+\rightarrow \mathbb{R}$ is $C^2$-smooth, strictly
increasing and strictly convex. Then the solution to \eqref{eqn: kantorovich problem} is unique and
concentrated on the graph of a map $T:X\to Y$ such that $T_\#\RP_1=\RP_2$, and $T$ is the unique solution
to \eqref{eqn: Monge problem}.
\end{theorem}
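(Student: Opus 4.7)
The strategy is to adapt the proof of Theorem \ref{Monge-extension}, the only substantive change being that the chain rule introduces an extra scalar factor $h'(r)/r$ (where $r = W_2(\mu,\nu)$) when differentiating $h(W_2(\Flow(\mu),\nu))$ in $t$. I expect the principal obstacle to be the degenerate case $\nu = \mu$: here $r=0$ makes the factor singular and $t \mapsto W_2(\Flow(\mu),\mu)$ is typically not differentiable at $t=0$, so the clean first-order condition has to be replaced by a one-sided estimate, and strict convexity of $h$ must be invoked to rule out non-uniqueness.

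First I would apply Theorem \ref{Kantorovich-duality-extension} to obtain $c$-concave Kantorovich potentials $U,V$ satisfying $U(\mu) + V(\nu) \le h(W_2(\mu,\nu))$ with equality $\RP_{1,2}$-almost everywhere. An easy adaptation of Lemma \ref{lem: Lipschitx potentials} shows $U$ is $W_2$-Lipschitz: $(\PM,W_2)$ has finite diameter $D$ by compactness of $M$, $h$ is Lipschitz on $[0,D]$ with constant $\max_{[0,D]}h'$ (being $C^2$ there), and $W_2(\cdot,\nu)$ is $1$-Lipschitz, so each $\mu \mapsto h(W_2(\mu,\nu)) - V(\nu)$ is uniformly Lipschitz in $\nu$, and so is their infimum $U$. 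Corollary \ref{cor: pointwise differentiability}, applicable since $\RP_1 \ll \RP_0$, then supplies a measurable section $\bD U$ such that the pointwise limit \eqref{eqn: pointwise limit} holds for $\RP_1$-a.e.\ $\mu$, simultaneously for every $w$ in a fixed countable dense subset $W \subset \Xinf$ of $\Xz$.

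Now fix such a $\mu \in \pac$ and suppose $U(\mu) + V(\nu) = h(W_2(\mu,\nu))$ for some $\nu$. In the generic case $r := W_2(\mu,\nu) > 0$, the function $\alpha(t) := U(\Flow(\mu)) - h(W_2(\Flow(\mu),\nu))$ is differentiable at $t=0$ by the chain rule combined with Theorem \ref{Wasserstein-derivative}, and since it is maximized at $t=0$ I get
\begin{equation*}
\int_M \left\langle \bD U(\mu)(x) - \tfrac{h'(r)}{r}\nabla \phi_\nu(x),\, w(x) \right\rangle_g\, d\mu(x) = 0
\end{equation*}
for every $w \in W$, and hence by density for every $w \in \Xz$. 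Consequently $\nabla \phi_\nu = \tfrac{r}{h'(r)}\bD U(\mu)$ in $L^2(\mu)$. Taking squared $L^2(\mu)$-norms, using $\|\nabla \phi_\nu\|_{L^2(\mu)}^2 = W_2^2(\mu,\nu) = r^2$ (from Theorem \ref{thm: McCann Monge solutions on manifolds}), and cancelling $r^2 > 0$, I obtain $h'(r) = \|\bD U(\mu)\|_{\Xmu}$. Strict convexity of $h$ makes $h'$ strictly increasing, hence injective, so $r$, and therefore $\nu = \exp(-\nabla \phi_\nu)_\# \mu$, is uniquely determined by $\mu$ and $U$.

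To rule out an additional equality point at $\nu = \mu$, I would rely on the pointwise estimate $d(x,\flow(x)) = |t|\|w(x)\|_g + O(t^2)$, giving $W_2(\Flow(\mu),\mu) \le |t|\|w\|_{\Xmu} + O(t^2)$, together with the Taylor bound $h(s) - h(0) \leq h'(0)s + O(s^2)$ for small $s$ (from $h \in C^2$). Combining these with the differentiability of $U$ along $\Flow$ from Corollary \ref{cor: pointwise differentiability}, the maximality at $t=0$ of $\alpha(t) := U(\Flow(\mu)) - h(W_2(\Flow(\mu),\mu))$ yields $t\int_M \langle \bD U(\mu),w\rangle_g\, d\mu + o(t) \leq h'(0)|t|\|w\|_{\Xmu} + O(t^2)$. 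Dividing by $t$ and letting $t \to 0^+$ and $t \to 0^-$ separately gives $\bigl|\int_M \langle \bD U(\mu),w\rangle_g\, d\mu\bigr| \leq h'(0)\|w\|_{\Xmu}$ for every $w \in W$, and by density for every $w \in \Xmu$; taking $w = \bD U(\mu)$ then gives $\|\bD U(\mu)\|_{\Xmu} \leq h'(0)$. But in the generic case $\|\bD U(\mu)\|_{\Xmu} = h'(r) > h'(0)$ by strict monotonicity of $h'$, so the two cases cannot coexist, and $\nu$ is uniquely determined in either situation. This identifies a transport map $T$; uniqueness of $\RP_{1,2}$ then follows from the standard convex-combination argument at the end of the proof of Theorem \ref{Monge-extension}.
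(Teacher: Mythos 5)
Your proof is correct and follows essentially the same route as the paper's: the paper writes $c=\bar h(W_2^2)$ with $\bar h(s)=h(\sqrt s)$, so its chain-rule factor $2\bar h'(W_2^2(\mu,\nu))$ is exactly your $h'(r)/r$, and it likewise pins down $r=W_2(\mu,\nu)$ by taking $L^2(\mu)$-norms of the first-order condition and invoking strict monotonicity of $h'(s)=2s\bar h'(s^2)$. The only real divergence is the degenerate case $\nu=\mu$, which the paper absorbs by differentiating the \emph{squared} distance (Theorem \ref{Wasserstein-derivative} covers $\nu=\mu$) and then treating $\bar h'=0$ separately at the end, whereas your one-sided estimate $\|\bD U(\mu)|_{\Xmu}\le h'(0)$ is a more explicit (and arguably more careful) way of excluding coexistence with an equality point at $r>0$.
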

\begin{prf}
Since much of the proof is very similar to the proof of Theorem \ref{Monge-extension}, we only outline
the differences. First note that we can write
\begin{equation*}
c(\mu,\nu)=h(W_2(\mu,\nu))=\bar{h}(W_2^2(\mu,\nu)).
\end{equation*}
for $\bar{h}(s)=h(\sqrt{s})$. Then, for a curve $\mu_t$ of measures as in Theorem
\ref{Wasserstein-derivative}, \eqref{Wasserstein-der} combined with the chain rule implies
\begin{equation}
\dfrac{d}{dt}\Big|_{t=0} c(\mu_t,\nu) = 2\bar{h}'(W_2^2(\mu,\nu))\int_M\<\nabla\phi(x),w(x)\>_g\,d\mu(x).
\end{equation}
Letting $\RP$ solve \eqref{eqn: kantorovich problem} and $U$, $V$ be $W_2^2$-concave solutions to
\eqref{eqn: dual problem}, a similar argument to the proof of Theorem~\ref{Monge-extension}, implies that
for $\RP_1$ almost every $\mu$ and $\nu$ we have
\begin{align}
\bD U (\mu)(x) = 2\bar{h}'(W_2^2(\mu,\nu))\nabla\phi_\nu(x)\qquad\mu\text{-a.e.}\, 
\end{align}
What is left to show is the fact that for fixed $\mu$ the above equation uniquely determines $\nu$; that
is
\begin{eqnarray}\label{derivative-equality-h}
\bar{h}'(W_2^2(\mu,\nu_1))\nabla\phi_{\nu_1}=\bar{h}'(W_2^2(\mu,\nu_2))\nabla\phi_{\nu_2}
\implies\nabla \phi_{\nu_1}=\nabla \phi_{\nu_2},
\end{eqnarray}
where $\phi_{\nu_i}$ is the Kantorovich potential for transport between $\mu$ and $\nu_i$.
To prove that the above implication holds, we first take the squared norm of both sides of 
(\ref{derivative-equality-h}) and integrate with respect to $\mu$ to obtain
\begin{equation}\label{integral-of-square}
[\bar{h}'(W_2^2(\mu,\nu_1))]^2\int_M\|\nabla\phi_{\nu_1}|^2\,d\mu =
[\bar{h}'(W_2^2(\mu,\nu_2))]^2\int_M\|\nabla\phi_{\nu_2}|^2\,d\mu.
\end{equation}
Observe that $d_g^2(x,\exp_x(-\nabla\phi(x))=\|\nabla\phi(x)|^2$, and therefore
$W_2^2(\mu,\nu)=\int_M\|\nabla\phi|^2\,d\mu$.  This means that (\ref{integral-of-square}) implies
\begin{equation}\label{integral-of-square-equivalent}
\bar{h}'(W_2^2(\mu,\nu_1))W_2(\mu,\nu_1)=\bar{h}'(W_2^2(\mu,\nu_2))W_2(\mu,\nu_2).
\end{equation}
Now note that 
\begin{equation}\label{eqn: h derivatives}
h(s)=\bar{h}(s^2)\implies h'(s)=2s\bar{h}'(s^2),
\end{equation}
and since $h$ is assumed to be strictly convex, the mapping $s \mapsto h'(s)=2s\bar{h}'(s^2)$ is strictly
increasing. So \eqref{integral-of-square-equivalent} implies that $W_2(\mu,\nu_1)=W_2(\mu,\nu_2)$.  If
$\bar h'(W_2^2(\mu,\nu_1)) \neq 0$, we can then cancel this common factor in
\eqref{derivative-equality-h} to obtain the desired result.  On the other hand, if
$\bar{h}'(W_2^2(\mu,\nu_1)) =0$, we must have $h'(W_2(\mu,\nu_1)) = 0$ by \eqref{eqn: h derivatives}.  As
$h$ is strictly increasing and strictly convex, we can only have $h'(s) =0$ at $s=0$, so that
$0=W_2(\mu,\nu_1)=W_2(\mu,\nu_2)$, which of course implies $\nu_1=\nu_2$ $(=\mu)$ in this case as well.
\end{prf}
\section{Declarations}
\begin{itemize}

\item \textbf{Conflict of interest:} The authors have no relevant financial or non-financial interests to disclose.
\item \textbf{Ethics approval:} No ethics approval is required for this article. 
\item \textbf{Funding:} Brendan Pass is pleased to acknowledge support from Natural Sciences and Engineering
Research Council of Canada Grants 04658-2018 and 2024-04864.
\item \textbf{Data availability: }This article has no associated data.
\end{itemize}
\bibliography{refs}
\end{document}